\newtheorem{thm}{Theorem}[section]
\newtheorem{lemma}[thm]{Lemma}
\newtheorem{theorem}[thm]{Theorem}
\numberwithin{equation}{section}
\theoremstyle{definition}
\newtheorem{remark}[thm]{Remark}
\renewcommand{\qed}{\rule{3mm}{3mm}}
\renewenvironment{proof}
    {\vspace{1mm}\noindent\textbf{Proof.}}
    {\hspace*{\fill} $\qed$\vspace{1mm}}
\newenvironment{proof_of}[1]
    {\vspace{1mm}\noindent {\bf Proof of #1.}}
    {\hspace*{\fill} $\qed$\vspace{1mm}}
\newcommand{\R}{\mathbb R}
\newcommand{\al}{\alpha}
\newcommand{\be}{\beta}
\newcommand{\la}{\lambda}
\renewcommand{\phi}{\varphi}
\newcommand{\e}{\varepsilon}
\DeclareMathOperator{\ddiv}{div}
\begin{document}
\title[Stability of positive radial steady states]
{Stability of positive radial steady states for the parabolic H\'enon-Lane-Emden system}

\author{Daniel Devine} \email{dadevine@tcd.ie}
\author{Paschalis Karageorgis} \email{pete@maths.tcd.ie}
\address{School of Mathematics, Trinity College Dublin, Ireland.}

\keywords{Asymptotic expansion, radial solutions, stability, H\'enon-Lane-Emden system.}
\subjclass[2020]{35B35, 35B40, 35C20, 35J47, 35K40.}

\begin{abstract}
When it comes to the nonlinear heat equation $u_t - \Delta u = u^p$, the stability of positive
radial steady states in the supercritical case was established in the classical paper by Gui, Ni
and Wang.  We extend this result to systems of reaction-diffusion equations by studying the
positive radial steady states of the parabolic H\'enon-Lane-Emden system
\begin{equation*}
\left\{ \begin{aligned}
u_t - \Delta u &= |x|^k v^p &\mbox{ in } \R^n \times (0,\infty),\\
v_t - \Delta v &= |x|^l u^q &\mbox{ in } \R^n \times (0,\infty),
\end{aligned} \right.
\end{equation*}
where $k,l\geq 0$, $p,q\geq 1$ and $pq>1$.  Assume that $(p,q)$ lies either on or above the
Joseph-Lundgren critical curve which arose in the work of Chen, Dupaigne and Ghergu.  Then all
positive radial steady states have the same asymptotic behavior at infinity, and they are all
stable solutions of the parabolic H\'enon-Lane-Emden system in $\R^n$.
\end{abstract}

\maketitle

\section{Introduction}
We study the positive radial steady states of the parabolic H\'enon-Lane-Emden system
\begin{equation} \label{psys}
\left\{ \begin{aligned}
u_t - \Delta u &= |x|^k v^p &\mbox{ in } \R^n \times (0,\infty), \\
v_t - \Delta v &= |x|^l u^q &\mbox{ in } \R^n \times (0,\infty),
\end{aligned} \right.
\end{equation}
where $k,l\geq 0$, $p,q\geq 1$ and $pq>1$.  When it comes to the scalar equation $u_t - \Delta u =
u^p$, the classical paper by Gui, Ni and Wang \cite{GNW} gave rise to a critical power $p_c$ which
determines the behavior of all positive radial steady states.  If $1<p<p_c$, they are unstable, and
the graphs of any two steady states intersect one another.  If $p\geq p_c$, they are stable, and
the graphs of distinct steady states do not intersect.  In this paper, we establish an analogue of
the latter statement for the parabolic system \eqref{psys}, and we also complement the results of
Chen, Dupaigne and Ghergu \cite{CDG} for the corresponding elliptic system.

The positive steady states of \eqref{psys} satisfy the elliptic H\'enon-Lane-Emden system
\begin{equation} \label{esys}
\left\{ \begin{aligned}
-\Delta u &= |x|^k v^p &\mbox{ in } \R^n, \\
-\Delta v &= |x|^l u^q &\mbox{ in } \R^n,
\end{aligned} \right.
\end{equation}
which has been extensively studied over the last three decades.  Starting with the seminal papers
by Mitidieri \cite{EM96} and Serrin and Zou \cite{SZ2, SZ1} for the case $k=l=0$, it is now known
that positive regular radial solutions exist if and only if
\begin{equation} \label{Sob}
\frac{k+n}{p+1} + \frac{l+n}{q+1} \leq n-2.
\end{equation}
We refer the reader to \cite{BVG} and the references cited therein.  The same result is expected to
also hold for non-radial solutions, but this part has only been settled in dimensions $n\leq 4$,
and only partial results are available in higher dimensions \cite{BM, PQS, PS}.

Regarding the qualitative properties of positive radial solutions, a key role is played by an
explicit radial singular solution $(u_*,v_*)$ which has the form
\begin{equation} \label{ss}
u_*(r) = C_\al r^{-\al} = C_\al |x|^{-\al}, \qquad
v_*(r) = C_\be r^{-\be} = C_\be |x|^{-\be}.
\end{equation}
Assuming that \eqref{Sob} holds, this is easily seen to satisfy \eqref{esys} at all points $x\neq
0$, if
\begin{equation} \label{AB}
\al = \frac{k+2 + (l+2)p}{pq-1} ,\qquad \be = \frac{l+2 + (k+2)q}{pq-1}
\end{equation}
and the constants $C_\al, C_\be$ are given by
\begin{equation} \label{CaCb}
C_\al^{pq-1} = Q(\al) Q(\be)^p, \qquad C_\be^{pq-1} = Q(\be) Q(\al)^q,
\end{equation}
where $Q(\la)=\la(n-2-\la)$ for all $\la\in\R$.  Note that the existence condition \eqref{Sob} can
also be expressed in the equivalent form $\al+\be \leq n-2$.  Thus, $Q(\al)$ and $Q(\be)$ are both
positive, so the coefficients $C_\al,C_\be$ are well-defined, as long as $pq>1$ and \eqref{Sob}
holds.

A solution of the elliptic system \eqref{esys} is called linearly stable, if the associated
linearized system has a positive supersolution.  The linear stability of regular positive radial
solutions has been addressed by Chen, Dupaigne and Ghergu \cite{CDG}.  If it happens that
\begin{equation} \label{JL}
\left[ \frac{(n-2)^2 - (\al-\be)^2}{4} \right]^2 \geq pq Q(\al) Q(\be),
\end{equation}
then all regular positive radial solutions are linearly stable, and if \eqref{JL} does not hold,
then none of them are linearly stable.  We shall refer to \eqref{JL} as the Joseph-Lundgren
condition.  It is the exact analogue of a condition that arose in the phase plane analysis of
\cite{JL} for the scalar equation $-\Delta u = u^p$, and it only holds (for some $p,q$) in
dimensions $n\geq 11$.

The left hand side of \eqref{JL} arises as the sharp constant in a Rellich-type inequality that was
obtained by Caldiroli and Musina \cite{CM}.  Its relationship with the linear stability of radial
solutions is explained in \cite{CDG} for systems, in \cite{KS} for the equation $-\Delta u = u^p$,
and in \cite{PK09} for the biharmonic equation $\Delta^2 u = u^p$ which corresponds to the case
$p>q=1$.

The Joseph-Lundgren condition is also closely related to the intersection properties of radial
solutions.  Assuming \eqref{JL}, in particular, Chen, Dupaigne and Ghergu \cite{CDG} showed that
every regular positive radial solution $(u,v)$ satisfies
\begin{equation} \label{sep}
u_*(r) > u(r) \quad\text{and}\quad v_*(r) > v(r) \quad\text{for all $r>0$.}
\end{equation}
This is sometimes called the separation property for radial solutions.  It was first derived by
Wang \cite{W93} for the equation $-\Delta u = u^p$, and by the second author \cite{PK09} for the
biharmonic equation $\Delta^2 u = u^p$.  For the two scalar equations, it is already known that the
separation property holds if and only if \eqref{JL} does; see \cite{FGK, W93} for more details.

In this paper, we continue the study of the linearly stable positive radial solutions, and we
complement the results of \cite{CDG} in two possible directions.  First, we focus on the elliptic
system \eqref{esys}, and we determine the asymptotic behavior of all linearly stable positive
radial solutions.  More precisely, we show that they all approach the singular solution \eqref{ss}
and that the convergence occurs monotonically.

\begin{theorem}[Monotonic convergence] \label{MCT}
Consider the elliptic H\'enon-Lane-Emden system \eqref{esys}, where $k,l\geq 0$, $p,q\geq 1$,
$pq>1$ and $n\geq 11$. Assume the existence condition \eqref{Sob} and the Joseph-Lundgren condition
\eqref{JL}. If $(u,v)$ is a regular positive radial solution of \eqref{esys} and $(u_*,v_*)$ is the
singular solution \eqref{ss}, then $u/u_*$ and $v/v_*$ are increasing with
\begin{equation} \label{lim=1}
\lim_{r\to \infty} \frac{u(r)}{u_*(r)} = \lim_{r\to \infty} \frac{v(r)}{v_*(r)} = 1.
\end{equation}
\end{theorem}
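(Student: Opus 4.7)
The plan is to pass to Emden-Fowler variables so that the problem becomes an autonomous ODE system, use the separation property \eqref{sep} to trap the trajectory, and then exploit the Joseph-Lundgren condition \eqref{JL} to rule out oscillatory behavior near the singular equilibrium. Concretely, set $w(t)=r^\alpha u(r)$ and $z(t)=r^\beta v(r)$ with $t=\log r$. Using the identities $\alpha+k+2=\beta p$ and $\beta+l+2=\alpha q$ that follow from \eqref{AB}, a direct computation turns the radial version of \eqref{esys} into the autonomous system
\begin{align*}
\ddot w+(n-2-2\alpha)\dot w-Q(\alpha) w&=-z^p,\\
\ddot z+(n-2-2\beta)\dot z-Q(\beta) z&=-w^q,
\end{align*}
whose constant solution $(w,z)\equiv(C_\alpha,C_\beta)$ corresponds to $(u_*,v_*)$ by \eqref{CaCb}. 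Since $u/u_*=w/C_\alpha$ and $v/v_*=z/C_\beta$, the theorem reduces to showing that $w,z$ are non-decreasing in $t$ and converge to $C_\alpha,C_\beta$ as $t\to\infty$. Moreover, \eqref{sep} already gives $0<w<C_\alpha$ and $0<z<C_\beta$, and standard ODE bounds then control $\dot w,\dot z$.

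For the convergence, I would carry out an $\omega$-limit set analysis: the orbit is bounded in the four-dimensional phase space, its $\omega$-limit set is compact, connected and invariant, and either via a LaSalle-type Lyapunov functional or by excluding other positive equilibria in $[0,C_\alpha]\times[0,C_\beta]$ one identifies this set as $\{(C_\alpha,C_\beta,0,0)\}$. For the monotonicity, I would linearize at the singular equilibrium to obtain the characteristic polynomial
\[
P(\lambda)=[\lambda^2+(n-2-2\alpha)\lambda-Q(\alpha)][\lambda^2+(n-2-2\beta)\lambda-Q(\beta)]-pq\,Q(\alpha)Q(\beta),
\]
which satisfies $P(0)<0$ and thus has at least one positive and one negative real root; \eqref{JL} is precisely the condition that ensures the stable eigenvalues of the linearization are real, so the singular equilibrium appears as a node (rather than a spiral) on its two-dimensional stable manifold. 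The stable-manifold theorem then yields an asymptotic expansion $w(t)=C_\alpha+A_w\,e^{\lambda_- t}+o(e^{\lambda_- t})$ together with an analogous one for $z$, where $\lambda_-<0$ is the largest negative root of $P$; the separation property forces the leading coefficients to be negative, giving $\dot w,\dot z>0$ for all sufficiently large $t$.

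The principal obstacle is upgrading monotonicity from large $t$ to all of $\R$. I would attempt a contradiction argument, selecting the largest $t_0$ at which $\dot w$ (or $\dot z$) vanishes and using the ODE together with $w(t_0)<C_\alpha$, $z(t_0)<C_\beta$ to pin down the sign of $\ddot w(t_0)$ in a way incompatible with $\dot w$ changing sign there. Alternatively, one might construct a positively invariant wedge of the form $\{\dot w>0,\,\dot z>0,\,w<C_\alpha,\,z<C_\beta\}$ that the orbit enters once and cannot leave; identifying such a wedge and verifying its invariance is, in my view, where \eqref{JL} plays its decisive role beyond the linear analysis, and where most of the technical work is concentrated.
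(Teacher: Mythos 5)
Your Emden--Fowler reduction coincides with the paper's setup, but the heart of the theorem is that $u/u_*$ and $v/v_*$ are increasing for \emph{all} $r>0$, and that is exactly the step your outline leaves open. The stable-manifold expansion gives at most eventual monotonicity (and even there \eqref{sep} only yields $A_w\le 0$, not $A_w<0$), and neither of your two suggested upgrades works as stated. The local route fails because at a critical point $t_0$ the equation only gives $\ddot w(t_0)=Q(\al)\bigl(Q(\al)^{-1}z(t_0)^p - w(t_0)\bigr)$ up to sign conventions, i.e.\ the sign of $\ddot w(t_0)$ is governed by $w-Q(\al)^{-1}z^p$, whose sign is \emph{not} determined by $0<w<C_\al$, $0<z<C_\be$; nothing is pinned down at $t_0$ alone. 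What is actually needed is a global trajectory argument: the derivatives are nonnegative as $t\to-\infty$ because the solution is regular at the origin; if, say, $\dot w$ is the first to turn negative at $s_1$, then $\ddot w(s_1)\le 0$ forces the coupling term to have a definite sign at $s_1$, and one then shows that $\dot w<0<\dot z$ persists on all of $(s_1,\infty)$, so both functions are monotone there with limits solving the equilibrium equations, hence equal to $(0,0)$ or $(C_\al,C_\be)$ --- both incompatible with $w$ decreasing and $z$ increasing inside $(0,C_\al)\times(0,C_\be)$. This trapping-at-infinity contradiction is the missing idea; \eqref{JL} enters only through the separation property \eqref{sep}, not through any additional invariant-wedge construction.

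Your convergence step is also incomplete, and in the correct logical order it is not needed as a separate step. Compactness, connectedness and invariance of the $\omega$-limit set together with the absence of other equilibria in the open box do not identify the limit set: periodic orbits or other invariant sets are not excluded, and no LaSalle functional is exhibited --- none is obvious here, since the two damping coefficients $n-2-2\al$ and $n-2-2\be$ differ and the natural cross-term energy for this Hamiltonian-type system dissipates through $\dot w\dot z$, which is not sign-definite. Moreover the origin is itself an equilibrium in the closure of your trapping region, so convergence of $(w,z)$ to $(0,0)$ (i.e.\ $r^\al u\to 0$) must be ruled out, which your sketch does not address. In the paper the order is reversed: global monotonicity is established first by the argument above, convergence then follows because bounded increasing functions have limits, and the limit is identified from the fixed-point relations; the linear analysis of the quartic (your polynomial $P$, the paper's Lemma \ref{Roots}) is used for the separation property and the later asymptotic expansion, not for Theorem \ref{MCT} itself. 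As it stands, the proposal is an outline whose central step is acknowledged but not carried out.
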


Theorem \ref{MCT} was only known in the scalar case.  We refer the reader to \cite{LiYi, W93} for
the equation $-\Delta u = u^p$ and to \cite{FGK, GG1, PK09} for the biharmonic equation $\Delta^2 u
= u^p$.  If \eqref{Sob} holds with equality, then the Joseph-Lundgren condition \eqref{JL} is not
satisfied, so our result is not applicable.  In that exceptional case, the asymptotic behavior of
radial solutions is already known, but it is not dictated by the singular solution; see \cite{BVG,
HV} for more details.

The monotonic convergence of radial solutions for the scalar equation $-\Delta u = u^p$ is one of
the key ingredients in the classical paper by Gui, Ni and Wang \cite{GNW} towards the proof of
nonlinear stability of steady states for the associated parabolic equation $u_t - \Delta u = u^p$.
Following a similar approach, we employ Theorem \ref{MCT} to prove nonlinear stability of steady
states, more generally, for the parabolic H\'enon-Lane-Emden system \eqref{psys}.  As in
\cite{GNW}, we need to find the precise asymptotic expansion for all radial solutions at infinity.
Assuming that strict inequality holds in the Joseph-Lundgren condition \eqref{JL}, we show that
\begin{equation} \label{exp}
\begin{aligned}
u(r) &= C_\al r^{-\al} + D_1 r^{-\al-\gamma} + o(r^{-\al-\gamma})
&\quad\text{as $r\to \infty,$}\\
v(r) &= C_\be r^{-\be} + D_2 r^{-\be-\gamma} + o(r^{-\be-\gamma})
&\quad\text{as $r\to \infty$\phantom{,}}
\end{aligned}
\end{equation}
for some constants $D_1,D_2<0<\gamma$.  This asymptotic expansion was derived in \cite{GNW, LiYi}
for the equation $-\Delta u = u^p$ and in \cite{PK12, MW} for the biharmonic equation $\Delta^2 u =
u^p$. To prove nonlinear stability for the parabolic system \eqref{psys}, we are thus led to
introduce the norm
\begin{equation} \label{Norm1}
||(u,v)||_\gamma = \sup_{x\in\R^n}
\bigl| (1+|x|)^{\al + \gamma} u(x)\bigr| + \bigl| (1+|x|)^{\be + \gamma} v(x) \bigr|.
\end{equation}
If equality holds in \eqref{JL}, the asymptotic expansion \eqref{exp} needs to be slightly modified
to include logarithmic factors; see Theorem \ref{AsEx}.  In that case, a suitable norm is then
\begin{equation} \label{Norm2}
|||(u,v)|||_\gamma = \sup_{x\in\R^n}
\left| \frac{(1+|x|)^{\al + \gamma} u(x)}{\log(2 +|x|)} \right| +
\left| \frac{(1+|x|)^{\be + \gamma} v(x)}{\log(2 +|x|)} \right|.
\end{equation}

\begin{theorem}[Nonlinear stability] \label{ST}
Let the assumptions of the previous theorem hold, and consider the parabolic H\'enon-Lane-Emden
system \eqref{psys}.

\begin{itemize}
\item[(a)]
If strict inequality holds in \eqref{JL}, then $(u,v)$ is a stable solution of \eqref{psys} with
respect to the norm \eqref{Norm1} for some $\gamma>0$.

\item[(b)]
If equality holds in \eqref{JL}, then $(u,v)$ is a stable solution of \eqref{psys} with respect
to the norm \eqref{Norm2} for some $\gamma>0$.
\end{itemize}
In each case, $\gamma>0$ is an explicit constant which is defined in Lemmas \ref{Roots} and
\ref{LinSol}.
\end{theorem}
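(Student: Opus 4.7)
The plan is to follow the perturbation scheme of Gui, Ni and Wang \cite{GNW}. Write the solution as $(u,v) = (\bar u + \phi, \bar v + \psi)$, where $(\bar u, \bar v)$ is the radial steady state. Then $(\phi, \psi)$ satisfies
\begin{equation*}
\begin{aligned}
\phi_t - \Delta\phi &= |x|^k\bigl[(\bar v + \psi)^p - \bar v^p\bigr],\\
\psi_t - \Delta\psi &= |x|^l\bigl[(\bar u + \phi)^q - \bar u^q\bigr],
\end{aligned}
\end{equation*}
and I would solve the corresponding Duhamel integral equation in the Banach space $C([0,\infty); X_\gamma)$, where $X_\gamma$ is determined by the norm \eqref{Norm1} (or \eqref{Norm2} in the borderline case), by contraction mapping on a small ball. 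The stability statement then follows at once, since the fixed point remains uniformly small in $X_\gamma$ whenever the initial perturbation is.

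The central ingredient is a family of pointwise heat-kernel estimates on these weighted spaces. Linearizing the right-hand side around $(\bar u, \bar v)$ produces the coupled source terms $p|x|^k\bar v^{p-1}\psi$ and $q|x|^l\bar u^{q-1}\phi$. Using Theorem \ref{MCT} together with the identity $\beta p = \alpha + k + 2$ that follows from \eqref{AB}, we have $|x|^k\bar v^{p-1}\sim |x|^{\beta-\alpha-2}$ at infinity, so the linear source in the $\phi$-equation decays like $|x|^{-\alpha-2-\gamma}$ whenever $|\psi(x)|\lesssim (1+|x|)^{-\beta-\gamma}$. The crucial step is to show that convolving such a source with the heat kernel gains two powers of $|x|$ at infinity (the parabolic analogue of the bound $\int_0^\infty e^{t\Delta}|x|^{-\mu}\,dt\sim |x|^{-\mu+2}$), which produces an $X_\gamma$-estimate of the form
\begin{equation*}
\sup_{t\geq 0}(1+|x|)^{\alpha+\gamma}\Bigl|\int_0^t e^{(t-s)\Delta}\bigl[p|x|^k\bar v^{p-1}\psi(s)\bigr]\,ds\Bigr|\leq C\sup_{s\geq 0}\|(\phi(s),\psi(s))\|_\gamma,
\end{equation*}
together with its counterpart for the $\psi$-equation. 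The correct exponent $\gamma$ is dictated by the indicial equation $Q(\alpha+\gamma)Q(\beta+\gamma) = pq\,Q(\alpha)Q(\beta)$, obtained by inserting the scale-invariant ansatz $\phi = r^{-\alpha-\gamma}$, $\psi = r^{-\beta-\gamma}$ into the asymptotic linearization; as formalized in Lemmas \ref{Roots} and \ref{LinSol}, the Joseph-Lundgren condition \eqref{JL} is precisely the statement that this equation admits a real non-negative root, with equality producing a double root and thus the logarithmic factor in \eqref{Norm2}. The nonlinear remainders $(\bar v+\psi)^p - \bar v^p - p\bar v^{p-1}\psi$ and its analogue are controlled pointwise by $\bar v^{p-2}\psi^2 + |\psi|^p$ when $p\geq 2$ (or by $|\psi|^p$ when $1\leq p<2$); in either case they decay strictly faster than the linear source at infinity, yielding a quadratic contraction factor that vanishes with the size of the perturbation.

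The main obstacle is establishing the weighted heat-kernel bounds sharply enough to simultaneously accommodate the Hardy-type growth of $|x|^k\bar v^{p-1}$, the required decay $(1+|x|)^{-\alpha-\gamma}$ at infinity, and the regularity at the origin where $\bar u$ and $\bar v$ are smooth and positive. The Joseph-Lundgren condition is the precise threshold at which the indicial equation admits the real positive $\gamma$ that aligns all three constraints, and thus the precise threshold at which the Duhamel iteration converges. Part (b) is proved along the same lines as part (a), with the caveat that the double root forces every intermediate bound to carry a $\log$ factor, so the function space, the heat estimates, and the nonlinear bounds must all be recalibrated for the $\log$-weighted norm \eqref{Norm2} throughout.
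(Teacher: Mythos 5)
Your fixed-point scheme does not close, and the obstacle is structural rather than technical. At the exponent $\gamma$ you must use (the largest $\gamma>0$ with $Q(\al+\gamma)Q(\be+\gamma)=pq\,Q(\al)Q(\be)$), the pair $(|x|^{-\al-\gamma},|x|^{-\be-\gamma})$ is, up to constants, an exact positive solution of the linearized elliptic system (Lemma \ref{LinSol}). Consequently, when the linear coupling terms $p|x|^k \bar v^{p-1}\psi$ and $q|x|^l \bar u^{q-1}\phi$ are fed through your weighted Duhamel estimate over an infinite time horizon, they return the weighted norm with constant exactly $1$, not less than $1$: the gain of two powers of $|x|$ from $\int_0^\infty e^{t\Delta}|x|^{-\mu-2}\,dt=Q(\mu)^{-1}|x|^{-\mu}$ is cancelled precisely by the size of the Hardy-type coefficients, by the very indicial equation $Q(\al+\gamma)Q(\be+\gamma)=pq\,Q(\al)Q(\be)$ that you quote. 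So the map whose fixed point you seek is not a contraction in the space defined by \eqref{Norm1}, and the quadratic smallness of the remainders $(\bar v+\psi)^p-\bar v^p-p\bar v^{p-1}\psi$ cannot rescue an iteration whose linear part has operator norm $1$ (this marginality is intrinsic: $-\gamma$ is a root of the characteristic polynomial \eqref{char}). Choosing a smaller weight $\gamma'<\gamma$ makes matters worse, since then $Q(\al+\gamma')Q(\be+\gamma')<pq\,Q(\al)Q(\be)$ and the linear constant exceeds $1$. Making your scheme work would require uniform-in-time bounds for the full linearized semigroup (heat flow plus the coupled $|x|^{-2}$-type potentials) on the weighted space; that is exactly the estimate you label as the main obstacle and leave unproved, and it is a genuine spectral problem, not a routine heat-kernel convolution bound. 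The logarithmic recalibration you sketch for part (b) inherits the same gap.

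The paper avoids all of this by following the actual mechanism of \cite{GNW}, which your proposal never invokes. The regular radial steady states form a one-parameter family $(u_\xi,v_\xi)$ obtained by scaling, which is ordered and non-intersecting in $\xi$ (Lemma \ref{inter}, a consequence of the monotonicity in Theorem \ref{MCT}), and which is continuous in $\xi$ with respect to the norm \eqref{Norm1}; this continuity is where the asymptotic expansion of Theorem \ref{AsEx} enters, since the coefficient of $r^{-\al-\gamma}$ transforms explicitly under the scaling. One then traps any initial datum that is $\delta$-close to $(u_\xi,v_\xi)$ between the barriers $(u_{\xi-\theta},v_{\xi-\theta})$ and $(u_{\xi+\theta},v_{\xi+\theta})$, and the comparison principle for parabolic systems \cite{LS} keeps the solution trapped for all time; stability follows because the two barriers are $\varepsilon$-close to $(u_\xi,v_\xi)$ in the weighted norm. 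Your write-up is missing exactly these ingredients (the ordered foliation of steady states and the parabolic comparison principle), and without them, or without a genuine linearized-semigroup decay theory, the argument as written fails.
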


To the best of our knowledge, the only scalar analogue of Theorem \ref{ST} appeared in the
classical paper by Gui, Ni and Wang \cite{GNW}.  There are other related results, however, which
establish the existence of small-amplitude solutions.  In some sense, these can be regarded as
nonlinear stability results for the zero solution.  We refer the reader to \cite{GG2} for the
scalar biharmonic equation $u_t + \Delta^2 u = |u|^{p-1}u$ and to \cite{EH, LL} for the parabolic
system \eqref{psys}.

Finally, we note that several authors have studied quasilinear variations of the elliptic system
\eqref{esys} that involve the $m$-Laplace operator.  Perhaps the most relevant results are the ones
dealing with the coercive version of our system which reads
\begin{equation} \label{coer}
\left\{ \begin{aligned}
\Delta_m u &= |x|^k u^{p_1} v^{p_2} &\mbox{ in } \R^n \\
\Delta_m v &= |x|^l u^{q_1} v^{q_2} &\mbox{ in } \R^n
\end{aligned} \right.
\end{equation}
with $\Delta_m u = \ddiv(|\nabla u|^{m-2}\nabla u)$ for some $m>1$ and $k,l,p_1,p_2,q_1,q_2\geq 0$.
The positive radial solutions of this system are unbounded and increasing, while their monotonic
convergence was established in \cite{DK} under very weak hypotheses.  For the existence and other
properties of solutions, we refer the reader to \cite{BVG, DD, FV, GGS} and the references cited
therein.

In Section 2, we collect some preliminary results that we need to establish the two main theorems.
In Section 3, we prove our monotonic convergence result, Theorem \ref{MCT}, and we also obtain the
asymptotic expansion of all positive radial solutions; see Theorem \ref{AsEx}.  In Section 4, we
then prove our nonlinear stability result, Theorem \ref{ST}.

\section{Preliminary results}

\begin{lemma} \label{Limit}
Suppose $(u,v)$ is a regular positive radial solution of the elliptic system \eqref{esys}, where
$k,l\geq 0$, $p,q\geq 1$ and $pq>1$.  Then both $u(r)$ and $v(r)$ are decreasing with
\begin{equation} \label{lims}
\lim_{r\to \infty} u(r) = \lim_{r\to \infty} v(r) = 0.
\end{equation}
\end{lemma}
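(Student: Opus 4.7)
The plan is to work with the radial ODE system
\begin{align*}
-(r^{n-1}u')' &= r^{n-1+k}v^p, \\
-(r^{n-1}v')' &= r^{n-1+l}u^q,
\end{align*}
which follows from \eqref{esys} written in polar coordinates. First I would use the regularity of the solution at the origin, which forces $u'(0)=v'(0)=0$, together with integration from $0$ to $r$ to conclude that
\begin{equation*}
r^{n-1}u'(r) = -\int_0^r s^{n-1+k}v(s)^p\,ds, \qquad
r^{n-1}v'(r) = -\int_0^r s^{n-1+l}u(s)^q\,ds.
\end{equation*}
Because the integrands are strictly positive for $r>0$ (the solution $(u,v)$ being positive), both right-hand sides are strictly negative, hence $u'(r)<0$ and $v'(r)<0$ for all $r>0$. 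This gives the monotonicity claim.

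Since $u$ and $v$ are positive and decreasing, the limits $u_\infty = \lim_{r\to\infty}u(r)$ and $v_\infty = \lim_{r\to\infty}v(r)$ exist and are nonnegative; the task is to rule out $u_\infty>0$ or $v_\infty>0$. Suppose, for contradiction, that $u_\infty > 0$. Then $u(r) \geq u_\infty$ for all $r$, and inserting this lower bound into the integral representation of $v'$ gives
\begin{equation*}
-v'(r) \geq \frac{1}{r^{n-1}}\int_0^r s^{n-1+l} u_\infty^q\, ds
= \frac{u_\infty^q}{n+l}\, r^{1+l}.
\end{equation*}
Integrating this inequality from $1$ to $R$ yields $v(1) - v(R) \geq \frac{u_\infty^q}{(n+l)(2+l)}(R^{2+l}-1)$, which tends to $+\infty$ as $R\to\infty$ and contradicts the positivity of $v(R)$. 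Hence $u_\infty = 0$, and by the symmetric argument (swapping the roles of the two equations) also $v_\infty = 0$.

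The argument is essentially a monotone-plus-ODE-estimate computation, and no step looks like a real obstacle; the only subtlety is making sure regularity at the origin really gives $u'(0)=v'(0)=0$, so that the integration from $0$ is justified and the boundary term vanishes. This is standard for solutions that extend continuously through the origin in $\R^n$ with $n\geq 2$, which is the meaning of \emph{regular} here.
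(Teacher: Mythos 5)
Your proposal is correct and follows essentially the same route as the paper: integrate the radial ODEs from the origin using $u'(0)=v'(0)=0$ to get monotonicity, then assume a positive limit for $u$, feed it into the equation for $v$, and integrate twice to force $v$ to become negative, a contradiction. The only cosmetic difference is that you integrate the resulting inequality from $1$ to $R$ rather than from $0$, which changes nothing of substance.
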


\begin{proof}
Since $(u,v)$ is a regular positive radial solution of \eqref{esys}, $u(r)$ satisfies
\begin{equation*}
-(r^{n-1} u'(r))' = r^{n-1+k} \, v(r)^p > 0 \quad\text{for all $r\geq 0$}
\end{equation*}
and also $u'(0)=0$.  Thus, $u(r)$ is decreasing for all $r\geq 0$, and the first limit in
\eqref{lims} exists.  Let us denote this limit by $L\geq 0$.  Then $u(r) \geq L$ for all $r\geq 0$,
so we have
\begin{equation*}
-(r^{n-1} v'(r))' = r^{n-1+l} \, u(r)^q \geq L^q r^{n-1+l} \quad\text{for all $r\geq 0$.}
\end{equation*}
Integrating this equation twice, we conclude that
\begin{equation*}
-v(r) + v(0) \geq \frac{L^q r^{l+2}}{(n+l)(l+2)} \quad\text{for all $r\geq 0$.}
\end{equation*}
Since $l\geq 0$ and $v(r)$ is positive for all $r\geq 0$, this implies that $L=0$.  In other words,
the first limit in \eqref{lims} is zero. The second limit must also be zero for similar reasons.
\end{proof}

To study the linearized equations associated with the elliptic system \eqref{esys}, we shall need a
basic fact about the associated characteristic polynomial.  This has the form
\begin{equation} \label{char}
F(\la) = Q(\al - \la) Q(\be - \la) - pq Q(\al) Q(\be),
\end{equation}
where $Q(\la) = \la(n-2-\la)$ for each $\la\in\R$.  Since the quadratic $Q(\la)$ is symmetric
around the point $\frac{n-2}{2}$, we see that the quartic $F(\la)$ is symmetric around the point
\begin{equation} \label{lstar}
\la_* = \frac{\al + \be}{2} - \frac{n-2}{2}.
\end{equation}
We now exploit this fact to draw some conclusions about the associated eigenvalues.

\begin{lemma} \label{Roots}
Suppose $p,q\geq 1$ and $pq>1$, $\al,\beta>0$ and $\al+\beta\leq n-2$.  Suppose also that the
Joseph-Lundgren condition \eqref{JL} holds.  Then $\la_*<0$ and the polynomial \eqref{char} has a
root in each of the intervals $(-\infty,2\la_*)$, $(2\la_*,\la_*]$, $[\la_*,0)$ and $(0,\infty)$.
Moreover, $\la_*$ is itself a root if and only if \eqref{JL} holds with equality, in which case
$\la_*$ is actually a double root.
\end{lemma}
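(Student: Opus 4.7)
The plan is to combine the symmetry of $F$ around $\la_*$ with the intermediate value theorem, after pinning down the signs of $F$ at three carefully chosen points. As a preliminary step, I would establish the strict inequality $\la_*<0$. The hypothesis $\al+\be\leq n-2$ already yields $\la_*\leq 0$, and the borderline case $\al+\be = n-2$ can be excluded: in that case $Q(\al) = Q(\be) = \al\be$ and the identity $(n-2)^2 - (\al-\be)^2 = (\al+\be)^2 - (\al-\be)^2 = 4\al\be$ reduces \eqref{JL} to $(\al\be)^2 \geq pq\,(\al\be)^2$, which contradicts $pq>1$.

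Next I would evaluate $F$ at $\la = 0$, $\la = \la_*$, and $\la = 2\la_*$. Since $Q(0)=0$, a one-line computation gives $F(0) = (1-pq)Q(\al)Q(\be) < 0$, and symmetry of $F$ around $\la_*$ immediately yields $F(2\la_*) = F(0) < 0$. At $\la_*$ itself, the identities $\al - \la_* = \frac{(n-2) + (\al-\be)}{2}$ and $\be - \la_* = \frac{(n-2) - (\al-\be)}{2}$ give $Q(\al - \la_*) = Q(\be - \la_*) = \frac{(n-2)^2 - (\al-\be)^2}{4}$, so that $F(\la_*)$ is precisely the difference between the two sides of \eqref{JL}. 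Hence $F(\la_*) \geq 0$, with equality if and only if \eqref{JL} holds with equality.

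Once these three sign evaluations are in place, the four roots are produced by four applications of the intermediate value theorem. The leading coefficient of $F$ as a polynomial in $\la$ is $+1$ (the product of the two $-\la^2$ leading terms coming from $Q(\al-\la)$ and $Q(\be-\la)$), so $F(\la)\to +\infty$ as $|\la|\to\infty$. Reading the signs from left to right ($+\infty$, then $<0$ at $2\la_*$, then $\geq 0$ at $\la_*$, then $<0$ at $0$, then $+\infty$) produces one root in each of $(-\infty, 2\la_*)$, $(2\la_*, \la_*]$, $[\la_*, 0)$, and $(0,\infty)$; the half-closed endpoints $\la_*$ absorb the equality case gracefully.

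For the final claim, I would exploit the fact that symmetry of $F$ around $\la_*$ forces the Taylor expansion of $F$ about $\la_*$ to contain only even powers of $\la - \la_*$. Hence if $F(\la_*)=0$, then $\la_*$ is a root of even multiplicity, and in particular at least a double root; conversely $\la_*$ being a root is equivalent to $F(\la_*) = 0$, which by the computation above is equivalent to equality in \eqref{JL}. The only step that demands any genuine care is the opening argument that rules out $\la_* = 0$; everything else is bookkeeping with signs and the established symmetry of $F$.
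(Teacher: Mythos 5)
Your proposal is correct and follows essentially the same route as the paper: evaluate $F$ at $0$, $2\la_*$ and $\la_*$, use the symmetry of $F$ about $\la_*$ together with Bolzano's theorem for the four roots, and invoke symmetry again for the double root. The only cosmetic difference is how $\la_*<0$ is obtained — you rule out the borderline case $\al+\be=n-2$ by a direct computation showing \eqref{JL} would force $pq\leq 1$, whereas the paper deduces $\la_*\neq 0$ from the reformulation of \eqref{JL} as $F(\la_*)\geq 0$ combined with $F(0)<0$; both arguments are equivalent in substance.
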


\begin{proof}
Since $\al+\be\leq n-2$ by assumption, we have $\la_*\leq 0$ by \eqref{lstar}.  Note that the
quartic polynomial \eqref{char} has leading term $\la^4$, and it is symmetric around $\la_*$ with
\begin{equation*}
\lim_{\la \to \pm \infty} F(\la) = +\infty, \qquad F(2\la_*) = F(0) = (1-pq) Q(\al)Q(\be) < 0
\end{equation*}
because $pq>1$ and $0<\al,\be<n-2$.  In addition, it is easy to check that
\begin{equation*}
Q(\al - \la_*) = Q(\be - \la_*) = \frac{(n-2)^2 - (\al-\be)^2}{4}.
\end{equation*}
Thus, the Joseph-Lundgren condition \eqref{JL} can also be expressed in the form $F(\la_*)\geq 0$.
Since $F(0)<0$ by above, this implies that $\la_*\neq 0$ and thus $\la_*<0$.  It trivially follows
by Bolzano's theorem that $F(\la)$ has a root in each of the intervals
\begin{equation*}
(-\infty,2\la_*), \qquad (2\la_*,\la_*], \qquad [\la_*,0), \qquad (0,\infty).
\end{equation*}
Finally, if $\la_*$ itself happens to be a root, then it must be a double root by symmetry.
\end{proof}

\begin{remark}
If the Joseph-Lundgren condition \eqref{JL} does not hold, then the quartic $F(\la)$ has a
$W$-shaped graph which is symmetric around $\la_*$ with $F(\la_*)<0$.  Thus, its unique local
maximum lies below the $\la$-axis, and $F(\la)$ has two real roots only.
\end{remark}

\begin{lemma} \label{LinSol}
Let the assumptions of the previous lemma hold, and let $-\gamma<0$ denote the largest negative
root of the polynomial \eqref{char}.  Then the linearized system
\begin{equation} \label{LinSys}
\left\{ \begin{array}{cc}
-\Delta \phi = p|x|^k v_*^{p-1} \psi \\
-\Delta \psi = q|x|^l u_*^{q-1} \phi
\end{array} \right. \qquad\text{in $\R^n \setminus \{0\}$}
\end{equation}
which is associated with the singular solution \eqref{ss} is satisfied by the functions
\begin{equation} \label{PhiPsi}
\phi(x) = \frac{p C_\be^{p-1}}{Q(\al+\gamma)} \cdot |x|^{-\al-\gamma}, \qquad
\psi(x) = |x|^{-\be-\gamma}.
\end{equation}
\end{lemma}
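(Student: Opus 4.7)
The plan is a direct verification: substitute the candidate pair \eqref{PhiPsi} into the linearized system \eqref{LinSys} and check that both sides agree, using the elementary identity $-\Delta(r^{-\mu}) = Q(\mu)\, r^{-\mu-2}$ together with the characterization of $-\gamma$ as a root of the characteristic polynomial \eqref{char}.

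First, I would note that for any real $\mu$, the radial function $r^{-\mu}$ satisfies $-\Delta(r^{-\mu}) = Q(\mu)\, r^{-\mu-2}$. Applied to the ansatz \eqref{PhiPsi}, this gives
\begin{equation*}
-\Delta\phi = p\,C_\be^{p-1}\, r^{-\al-\gamma-2}, \qquad -\Delta\psi = Q(\be+\gamma)\, r^{-\be-\gamma-2}.
\end{equation*}
The right-hand sides of \eqref{LinSys}, after substituting $u_*, v_*$ from \eqref{ss} and $\phi,\psi$ from \eqref{PhiPsi}, become
\begin{equation*}
p|x|^k v_*^{p-1}\psi = p\,C_\be^{p-1}\, r^{k-\be p-\gamma}, \qquad q|x|^l u_*^{q-1}\phi = \frac{pq\,C_\al^{q-1}C_\be^{p-1}}{Q(\al+\gamma)}\, r^{l-\al q-\gamma}.
\end{equation*}

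Next, I would check that the exponents match. From the definition \eqref{AB} of $\al,\be$, a short calculation yields $\be p - \al = k+2$ and $\al q - \be = l+2$, so $k-\be p-\gamma = -\al-\gamma-2$ and $l-\al q-\gamma = -\be-\gamma-2$. This reduces the first equation of \eqref{LinSys} to a trivial identity and reduces the second equation to the scalar condition
\begin{equation*}
Q(\al+\gamma)\, Q(\be+\gamma) = pq\, C_\al^{q-1} C_\be^{p-1}.
\end{equation*}

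Finally, I would verify this scalar identity in two steps. From the relations \eqref{CaCb} one computes
\begin{equation*}
\bigl(C_\al^{q-1} C_\be^{p-1}\bigr)^{pq-1} = C_\al^{(q-1)(pq-1)} C_\be^{(p-1)(pq-1)} = Q(\al)^{pq-1} Q(\be)^{pq-1},
\end{equation*}
so $C_\al^{q-1}C_\be^{p-1} = Q(\al)Q(\be)$. The required identity thus becomes $Q(\al+\gamma)Q(\be+\gamma) = pq\,Q(\al)Q(\be)$, which is precisely $F(-\gamma)=0$ by the definition \eqref{char} of $F$, and this holds because $-\gamma$ is a root of $F$ by Lemma \ref{Roots}. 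There is no real obstacle here; the only thing to be careful about is bookkeeping the exponent identities and the compatibility between \eqref{CaCb} and the characteristic equation.
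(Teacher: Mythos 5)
Your proposal is correct and takes essentially the same route as the paper: direct substitution using $-\Delta |x|^{-\mu} = Q(\mu)|x|^{-\mu-2}$, the exponent identities $\be p-\al = k+2$ and $\al q - \be = l+2$ from \eqref{AB}, and reduction of the second equation to $F(-\gamma)=0$ via the identity $C_\al^{q-1}C_\be^{p-1} = Q(\al)Q(\be)$ from \eqref{CaCb}. The only detail the paper adds is the observation (via $-\gamma\in[\la_*,0)$ and the estimate \eqref{a+c}) that $Q(\al+\gamma)>0$, which makes $\phi$ well defined and positive; in your argument well-definedness can also be seen directly, since $Q(\al+\gamma)=0$ would contradict $F(-\gamma)=0$ together with $pq\,Q(\al)Q(\be)>0$.
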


\begin{proof}
By Lemma \ref{Roots}, the largest negative root $-\gamma$ lies in the interval $[\la_*,0)$, where
$\la_*$ is defined by \eqref{lstar}.  This is easily seen to imply that $Q(\al+\gamma)>0$ because
\begin{equation} \label{a+c}
\al + \gamma \leq \al - \la_* = \frac{n-2}{2} + \frac{\al-\be}{2} <
\frac{n-2}{2} + \frac{\al+\be}{2} \leq n-2.
\end{equation}
Using the fact that $-\Delta |x|^{-\la} = Q(\la) |x|^{-\la-2}$ for all $\la\in\R$, we now find that
\begin{equation*}
-\Delta \phi = pC_\be^{p-1} \cdot |x|^{-\al-\gamma -2} =
pv_*^{p-1} \cdot |x|^{\be(p-1) - \al - \gamma - 2} = p|x|^k v_*^{p-1} \psi
\end{equation*}
by \eqref{ss}, \eqref{AB} and \eqref{PhiPsi}.  Using the exact same argument, we also have
\begin{equation*}
-\Delta \psi = Q(\beta + \gamma) \cdot |x|^{-\beta-\gamma-2}
= \frac{Q(\al+\gamma) Q(\be +\gamma)}{pq C_\al^{q-1} C_\be^{p-1}} \cdot q|x|^l u_*^{q-1} \phi.
\end{equation*}
On the other hand, $-\gamma$ is a root of the characteristic polynomial \eqref{char}, so
\begin{align*}
0 = F(-\gamma) &= Q(\al + \gamma) Q(\be +\gamma) - pq Q(\al) Q(\be) \\
&= Q(\al+\gamma) Q(\be + \gamma) - pq C_\al^{q-1} C_\be^{p-1}
\end{align*}
by \eqref{CaCb}.  Combining the last two equations, we conclude that $-\Delta \psi = q|x|^l
u_*^{q-1} \phi$.
\end{proof}

The last result in this section plays a key role in our subsequent analysis.  This result is due to
Chen, Dupaigne and Ghergu \cite{CDG}.  The proof given below is only a minor modification of the
original proof, and it is included here mostly for the sake of completeness.

\begin{lemma} \label{CoLe}
Let $(u,v)$ be a regular positive radial solution of the elliptic system \eqref{esys}, where
$k,l\geq 0$, $p,q\geq 1$, $pq>1$ and $n\geq 11$.  Let $(u_*,v_*)$ be the singular solution
\eqref{ss}. Assume the existence condition \eqref{Sob} and the Joseph-Lundgren condition
\eqref{JL}.  Then
\begin{equation}
u_*(r) > u(r) \quad\text{and}\quad v_*(r) > v(r) \quad\text{for all $r>0$.}
\end{equation}
\end{lemma}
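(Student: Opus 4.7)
The plan is a proof by contradiction, using the positive linearized solutions $(\phi,\psi)$ furnished by Lemma \ref{LinSol} as Picone-style comparison functions. Since $u,v$ are bounded near the origin while $u_*,v_*$ diverge, we automatically have $u<u_*$ and $v<v_*$ for all sufficiently small $r>0$. Suppose the conclusion fails, and let $r_0>0$ be the smallest radius at which at least one of the two inequalities becomes an equality. After relabeling if necessary, we may assume $u(r_0)=u_*(r_0)$ with $v(r_0)\leq v_*(r_0)$, and write $U:=u-u_*$, $V:=v-v_*$, so that $U<0$ and $V\leq 0$ on $(0,r_0]$ with $U(r_0)=0$.

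The key computation uses Green's identity for the pairs $(U,\psi)$ and $(V,\phi)$ on the annulus $\{\e<|x|<r_0\}$. Substituting the equations $-\Delta U=|x|^k(v^p-v_*^p)$ and $-\Delta V=|x|^l(u^q-u_*^q)$, together with the linearized equations in \eqref{LinSys}, the bulk integrand simplifies to
\[
|x|^k\psi\bigl[v^p-v_*^p-pv_*^{p-1}V\bigr]+|x|^l\phi\bigl[u^q-u_*^q-qu_*^{q-1}U\bigr].
\]
These Taylor remainders are both nonnegative by convexity of $s\mapsto s^p$ and $s\mapsto s^q$ (valid because $p,q\geq 1$). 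After letting $\e\to 0^+$ and showing that the boundary contribution at the origin vanishes, this integrated identity produces a sign constraint on the boundary data at $r_0$.

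To close the argument, observe that $V\leq 0$ on $(0,r_0]$ yields $-\Delta U=|x|^k(v^p-v_*^p)\leq 0$ on the punctured ball, so $U$ is subharmonic there, strictly negative in the interior, and zero on the outer sphere. Hopf's lemma therefore forces $U'(r_0)>0$ strictly. Combining this positive lower bound with the sign constraint from Green's identity---and, when $V(r_0)<0$ strictly, disposing of the residual $V$-contribution either by a secondary comparison based on the ratio $V/\psi$ or by iterating the argument beyond $r_0$---produces the desired contradiction.

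The main technical obstacle is verifying that the inner boundary contribution at $r=\e$ tends to zero as $\e\to 0^+$. This requires careful asymptotic matching among the regular expansions $u(r)=u(0)+O(r^2)$, $v(r)=v(0)+O(r^2)$ near the origin, the singular profiles $u_*\propto r^{-\al}$, $v_*\propto r^{-\be}$, and the linearized functions $\phi\propto r^{-\al-\gamma}$, $\psi\propto r^{-\be-\gamma}$. The resulting dominant scale is $\e^{n-2-\al-\gamma}$ and its $\be$-counterpart; the inequality $\al+\gamma<n-2$ from \eqref{a+c}, which is the concrete numerical content of the Joseph-Lundgren hypothesis in this setting, is precisely what forces these scales to zero.
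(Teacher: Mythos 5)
Your overall device --- testing the convexity inequalities $v^p-v_*^p\geq p\,v_*^{p-1}V$ and $u^q-u_*^q\geq q\,u_*^{q-1}U$ against the explicit positive solution $(\phi,\psi)$ of the linearized system \eqref{LinSys}, integrating the resulting Wronskian/Green identity from the origin, and checking that the inner boundary term vanishes --- is exactly the mechanism the paper uses. The genuine gap is in where you evaluate the resulting inequality. At your point $r_0$ (first radius where $u$ touches $u_*$), the integrated identity reads, in your sign convention and using $U(r_0)=0$, that $\psi U'(r_0)+\phi V'(r_0)-V(r_0)\phi'(r_0)\leq 0$. Hopf's lemma does give $\psi U'(r_0)>0$, but $-V(r_0)\phi'(r_0)\leq 0$ (since $V\leq 0$ and $\phi'<0$), and $V'(r_0)$ has no controlled sign, so the $V$-contribution can be as negative as it likes and no contradiction follows. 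This is not a removable technicality: essentially all the work in the paper goes into manufacturing an evaluation point at which \emph{all four} terms have favorable signs. Your proposed repairs (a ``secondary comparison based on the ratio $V/\psi$'', or iterating past $r_0$) are not carried out, and past $r_0$ you lose the sign of $U$, so it is far from clear they can be. The paper proceeds differently: writing $U=u_*-u$, $V=v_*-v$, it first disposes of the case where one of $U'$, $V'$ never vanishes (then the conclusion follows by integrating the equation and using Lemma \ref{Limit}), and otherwise works at the first critical point $r_1$ of $U$, proving through an ODE comparison of the first sign-change radii ($r_3$, $r_4$ in the paper) that $U(r_1)\leq 0$, $U'(r_1)=0$, $V(r_1)<0$, $V'(r_1)\leq 0$, i.e.\ \eqref{r1}; only with this full set of signs does the integrated inequality $U\psi'-\psi U'+V\phi'-\phi V'\leq 0$ yield a strict contradiction.

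A secondary, fixable error: your accounting of the inner boundary term is off. Near the origin $U\sim -C_\al \e^{-\al}$ and $V\sim -C_\be \e^{-\be}$ (they blow up like the singular solution, they are not bounded), so each of the terms $\e^{n-1}\psi U'$, $\e^{n-1}U\psi'$, $\e^{n-1}\phi V'$, $\e^{n-1}V\phi'$ is of size $\e^{\,n-2-\al-\be-\gamma}$, not $\e^{\,n-2-\al-\gamma}$ or $\e^{\,n-2-\be-\gamma}$. The inequality you actually need is $\al+\be+\gamma<n-2$, which does not follow from \eqref{a+c} alone; it holds because $\gamma\leq -\la_*$ and $\la_*<0$ by Lemma \ref{Roots}, which is how the paper argues. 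So the boundary term does vanish, but for a different reason than the one you give.
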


\begin{proof}
We consider the difference between the two solutions and let
\begin{equation} \label{UV2}
U(r) = u_*(r) - u(r), \qquad V(r) = v_*(r) - v(r).
\end{equation}
Suppose first that one of $U'(r)$, $V'(r)$ does not have a zero.  We may then assume, without loss
of generality, that $U'(r)<0$ for all $r>0$.  Since $\lim_{r\to \infty} U(r) = 0$ by Lemma
\ref{Limit}, this implies that $U(r)>0$ for all $r>0$.  In particular, $u_*(r) > u(r)$ for all
$r>0$, hence
\begin{equation*}
(r^{n-1} V'(r))' = r^{n-1+l} \, (u(r)^q- u_*(r)^q) < 0 \quad\text{for all $r>0$.}
\end{equation*}
On the other hand, the existence condition \eqref{Sob} ensures that $\be < \al+\be \leq n-2$, so
\begin{equation*}
\lim_{r\to 0^+} r^{n-1} V'(r) = \lim_{r\to 0^+} \be C_\be r^{n-2-\be} = 0.
\end{equation*}
We conclude that $V'(r)<0$ for all $r>0$.  Thus, $V(r)$ decreases to zero by Lemma \ref{Limit}, so
we also have $v_*(r) > v(r)$ for all $r>0$, and the result follows.

Suppose now that both $U'(r)$ and $V'(r)$ vanish at some point.  Letting $r_1$ and $r_2$ denote the
first zeros of $U'(r)$ and $V'(r)$, respectively, we must then have
\begin{equation} \label{U'V'}
U'(r) < 0 \quad\text{in $(0,r_1)$}, \qquad V'(r) < 0 \quad\text{in $(0,r_2)$}
\end{equation}
and also $U'(r_1) = V'(r_2)=0$.  Assume that $r_1\leq r_2$ without loss of generality, and let
\begin{align*}
r_3 &= \inf \{ r>0 : V(r) < 0\}, \\
r_4 &= \inf \{ r>0 : U(r) < 0\}.
\end{align*}
If it happens that $r_1\leq r_3$, then $V(r)>0$ in $(0,r_1)$, and this implies that
\begin{equation*}
(r^{n-1}U'(r))' = r^{n-1+k} \, (v(r)^p - v_*(r)^p) < 0 \quad\text{in $(0,r_1)$,}
\end{equation*}
which forces $r^{n-1}U'(r)$ to be positive in $(0,r_1)$, a contradiction.  We conclude that
$r_3<r_1$.  Interchanging the roles of $U(r)$ and $V(r)$, one similarly finds that $r_4 < r_2$.

Next, we show that $r_4\leq r_1$.  If it happens that $r_1<r_4$, then $r_1<r_4<r_2$. On the other
hand, $V(r)$ is strictly decreasing in $(0,r_2)$ by \eqref{U'V'}, so it is negative in $(r_3,r_2)$
and thus
\begin{equation*}
(r^{n-1}U'(r))' = r^{n-1+k} \, (v(r)^p - v_*(r)^p) > 0 \quad\text{in $(r_3,r_2)$.}
\end{equation*}
Since $r_3 < r_1< r_2$ and $U'(r_1)=0$ by definition, $U(r)$ is strictly increasing on $(r_1,r_2)$.
Noting that this interval contains $r_4$, we find that $U(r_1) < U(r_4) = 0$, which obviously
contradicts the definition of $r_4$.  We must thus have $r_4\leq r_1\leq r_2$ and $r_3 < r_1\leq
r_2$. Since $U(r)$ is strictly decreasing on $(0,r_1)$ and $V(r)$ is strictly decreasing on
$(0,r_2)$, this also implies
\begin{equation} \label{r1}
U(r_1) \leq 0, \qquad U'(r_1)= 0, \qquad V(r_1) < 0, \qquad V'(r_1) \leq 0.
\end{equation}

Finally, we use a convexity argument to finish the proof.  Since $p,q\geq 1$, we have
\begin{equation*}
\left\{ \begin{array}{cc}
-\Delta U = |x|^k \, (v_*^p - v^p) \leq p|x|^k v_*^{p-1} V \\
-\Delta V = |x|^l \, (u_*^q - u^q) \leq q|x|^l u_*^{q-1} U
\end{array} \right. \qquad\text{in $\R^n \setminus \{0\}$.}
\end{equation*}
Let $(\phi,\psi)$ be the positive radial solution of the linearized system which is defined by
\eqref{PhiPsi}.  Since this is a positive solution of \eqref{LinSys}, it follows by the last
equation that
\begin{equation*}
\left\{ \begin{array}{cc}
-\psi \Delta U \leq p|x|^k v_*^{p-1} V\psi = -V\Delta \phi \\
-\phi \Delta V \leq q|x|^l u_*^{q-1} U\phi = -U\Delta \psi
\end{array} \right. \qquad\text{in $\R^n \setminus \{0\}$.}
\end{equation*}
Adding the two inequalities and switching to polar coordinates, we conclude that
\begin{equation} \label{Mono}
\left[ r^{n-1} (U\psi' - \psi U') \right]' + \left[ r^{n-1} (V\phi' - \phi V') \right]' \leq 0
\quad\text{for all $r>0$.}
\end{equation}
To check that the expressions in square brackets both vanish as $r\to 0^+$, we now recall our
definitions \eqref{PhiPsi} and \eqref{UV2} of $\phi,\psi,U,V$.  These are easily seen to imply that
\begin{equation*}
r^{n-1} (U\psi' - \psi U') + r^{n-1} (V\phi' - \phi V') = O(r^{n-2-\al-\be-\gamma})
\quad\text{as $r\to 0^+$}.
\end{equation*}
On the other hand, it follows by Lemma \ref{Roots} and our computation \eqref{a+c} that
\begin{equation*}
\al + \be + \gamma \leq \al + \be - \la_* = \la_* + n-2 < n-2.
\end{equation*}
In view of the last two equations, one may then integrate \eqref{Mono} to arrive at
\begin{equation*}
U\psi' - \psi U' + V\phi' - \phi V' \leq 0 \quad\text{for all $r>0$.}
\end{equation*}
This actually contradicts \eqref{r1} because $\phi,\psi > 0$ and since $\phi',\psi'<0$ for all
$r>0$.
\end{proof}

\section{Asymptotic behavior of regular solutions}

In this section, we study the asymptotic behavior of the regular positive radial solutions of the
elliptic system \eqref{esys}.  Let $(u,v)$ be such a solution, and let $(u_*,v_*)$ be the singular
solution \eqref{ss}.  In order to establish Theorem \ref{MCT}, we need to show that
\begin{equation} \label{toshow}
\lim_{r\to \infty} \frac{u(r)}{u_*(r)} = \lim_{r\to \infty} \frac{v(r)}{v_*(r)} = 1
\end{equation}
and that the convergence occurs monotonically in each case.

\begin{proof_of}{Theorem \ref{MCT}}
We use the logarithmic change of variables
\begin{equation} \label{UV}
\mathcal U(s) = \frac{u(e^s)}{u_*(e^s)} = \frac{u(e^s)}{C_\al e^{-\al s}}, \qquad
\mathcal V(s) = \frac{v(e^s)}{v_*(e^s)} = \frac{v(e^s)}{C_\be e^{-\be s}},
\end{equation}
where $s= \log r$.  This is the standard Emden-Fowler transformation that has been used by several
authors in the past \cite{BN, GG1, GNW, PK09}, although the constants $C_\al, C_\be$ are usually
omitted for simplicity.  We have chosen to include them here in order to prove \eqref{toshow}
directly.

First of all, we note that $0 < \mathcal U(s), \mathcal V(s) < 1$ by Lemma \ref{CoLe}. Since
$(u,v)$ is a positive radial solution of the elliptic system \eqref{esys}, it is easy to check that
$(\mathcal U, \mathcal V)$ satisfies
\begin{equation} \label{EF}
\left\{ \begin{aligned}
\mathcal U''(s) + (n-2-2\al) \, \mathcal U'(s) &= Q(\al) \, (\mathcal U(s) - \mathcal V(s)^p) \\
\mathcal V''(s) + (n-2-2\be) \, \mathcal V'(s) &= Q(\be) \, (\mathcal V(s) - \mathcal U(s)^q)
\end{aligned} \right.
\end{equation}
for all $s\in\R$.  To study this autonomous system, we consider the sets
\begin{equation} \label{SUSV}
S_{\mathcal U} = \{ s\in\R : \mathcal U(s) > \mathcal V(s)^p \}, \qquad
S_{\mathcal V} = \{ s\in\R : \mathcal V(s) > \mathcal U(s)^q \}.
\end{equation}
If there exists a point $s_0\notin S_{\mathcal U} \cup S_{\mathcal V}$, then $\mathcal U(s_0) \leq
\mathcal V(s_0)^p \leq \mathcal U(s_0)^{pq}$, and this leads to the contradiction $\mathcal U(s_0)
\geq 1$.  In particular, no such point exists and $S_{\mathcal U} \cup S_{\mathcal V} = \R$.

Next, we claim that $\mathcal U'(s), \mathcal V'(s) \geq 0$ for all $s\in \R$.  This is true as
$s\to -\infty$ because
\begin{equation*}
\lim_{s\to -\infty} e^{-\al s} \, \mathcal U'(s) =
\lim_{r\to 0^+} r^{1-\al} \left( \frac{1}{C_\al} \,r^\al u(r) \right)' = \frac{\al u(0)}{C_\al} > 0
\end{equation*}
and since a similar statement holds for $\mathcal V'(s)$.  Suppose that one of these functions
becomes negative at some point.  We may then assume, without loss of generality, that $\mathcal
U'(s)$ is the first one to become negative.  In that case, there exist $s_1\in\R$ and $\delta_1>0$
such that
\begin{equation} \label{s1def}
\mathcal U'(s), \mathcal V'(s) \geq 0 \quad\text{on $(-\infty,s_1]$}, \qquad
\mathcal U'(s) < 0 \quad\text{on $(s_1, s_1+\delta_1)$.}
\end{equation}
Combining the definition of $s_1$ with \eqref{EF}, we now find that
\begin{equation} \label{s1}
0 \geq \mathcal U''(s_1) = \mathcal U''(s_1) + (n-2-2\al) \, \mathcal U'(s_1)
= Q(\al) \, (\mathcal U(s_1) - \mathcal V(s_1)^p).
\end{equation}
This implies $s_1\notin S_{\mathcal U}$ and thus $s_1 \in S_{\mathcal V}$.  Since $S_{\mathcal V}$
is open, we have $(s_1-\delta_2, s_1+\delta_2) \subset S_{\mathcal V}$ for some small enough $0<
\delta_2 \leq \delta_1$.  It follows by \eqref{EF} and \eqref{SUSV} that $e^{(n-2-2\be)s} \mathcal
V'(s)$ is strictly increasing on this interval, hence $\mathcal U'(s) < 0 < \mathcal V'(s)$
throughout $(s_1,s_1+\delta_2)$.

Consider the largest interval $I= (s_1,s_2)$ on which $\mathcal U'(s) < 0 < \mathcal V'(s)$. Along
this interval, it is clear that $\mathcal V(s) - \mathcal U(s)^q$ is strictly increasing. Since
$s_1\in S_{\mathcal V}$, this expression is positive when $s=s_1$, so it is positive throughout
$I$, and thus $I \subset S_{\mathcal V}$.  It follows by \eqref{EF} and \eqref{SUSV} that
$e^{(n-2-2\be)s} \mathcal V'(s)$ is strictly increasing on $I$.  Next, we apply the same argument
to the expression $\mathcal U(s) - \mathcal V(s)^p$.  This is strictly decreasing on $I$, and it is
non-positive when $s=s_1$ by \eqref{s1}, so it is negative throughout $I$, and $e^{(n-2-2\al)s}
\mathcal U'(s)$ is strictly decreasing on $I$.

We now recall that $I= (s_1,s_2)$ is the largest interval on which $\mathcal U'(s) < 0 < \mathcal
V'(s)$. Since $e^{(n-2-2\be)s} \mathcal V'(s)$ increases on this interval and since
$e^{(n-2-2\al)s} \mathcal U'(s)$ decreases, we conclude that $s_2=\infty$ and that $\mathcal U(s),
\mathcal V(s)$ are eventually monotonic. Denote their limits by
\begin{equation} \label{LULV}
\mathcal U_\infty = \lim_{s\to \infty} \mathcal U(s), \qquad
\mathcal V_\infty = \lim_{s\to \infty} \mathcal V(s)
\end{equation}
and note that $0\leq \mathcal U_\infty, \mathcal V_\infty \leq 1$ by Lemma \ref{CoLe}.  If it
happens that $\mathcal U_\infty > \mathcal V_\infty^p$, then
\begin{equation*}
\lim_{s\to\infty} \, \bigl[ \mathcal U'(s) + (n-2-2\al) \, \mathcal U(s) \bigr] = +\infty
\end{equation*}
by \eqref{EF}, and thus $\lim_{s\to \infty} \mathcal U(s) = +\infty$, a contradiction.  If it
happens that $\mathcal U_\infty < \mathcal V_\infty^p$, we get a similar contradiction.  We
conclude that $\mathcal U_\infty = \mathcal V_\infty^p$ and also $\mathcal V_\infty = \mathcal
U_\infty^q$ for similar reasons. Since $pq>1$ by assumption, the only possible solutions are then
\begin{equation*}
\mathcal U_\infty = \mathcal V_\infty = 0 \quad\text{and}\quad
\mathcal U_\infty = \mathcal V_\infty = 1.
\end{equation*}
The former case is excluded because $0<\mathcal V(s)<1$ is eventually increasing, while the latter
case is excluded because $0<\mathcal U(s)<1$ is eventually decreasing.

We may thus conclude that neither $\mathcal U'(s)$ nor $\mathcal V'(s)$ can become negative. In
particular, both $\mathcal U(s)$ and $\mathcal V(s)$ are increasing for all $s$, and the limits in
\eqref{LULV} exist. Using the exact same argument as before, one finds that both limits must be
equal to $1$.
\end{proof_of}

\begin{theorem} \label{AsEx}
Let $(u,v)$ be a regular positive radial solution of the elliptic system \eqref{esys}, where
$k,l\geq 0$, $p,q\geq 1$, $pq>1$ and $n\geq 11$.  Let $(C_\al r^{-\al}, C_\be r^{-\be})$ denote the
singular solution \eqref{ss}. Assume the existence condition \eqref{Sob} and the Joseph-Lundgren
condition \eqref{JL}.

\begin{itemize}
\item[(a)] If strict inequality holds in \eqref{JL}, then there exist constants $D_1,D_2$ such that
\begin{align*}
u(r) = C_\al r^{-\al} + D_1 r^{-\al-\gamma} + o(r^{-\al-\gamma}) \quad\text{as $r\to \infty$,} \\
v(r) = C_\be r^{-\be} + D_2 r^{-\be-\gamma} + o(r^{-\be-\gamma}) \quad\text{as $r\to \infty$.}
\end{align*}

\item[(b)] If equality holds in \eqref{JL}, then there exist constants $D_1,D_2$ such that
\begin{align*}
u(r) = C_\al r^{-\al} + D_1 r^{-\al-\gamma} \log r + o(r^{-\al-\gamma} \log r)
\quad\text{as $r\to \infty$,} \\
v(r) = C_\be r^{-\be} + D_2 r^{-\be-\gamma} \log r + o(r^{-\be-\gamma} \log r)
\quad\text{as $r\to \infty$.}
\end{align*}
\end{itemize}
In each case, $\gamma>0$ is an explicit constant which is defined in Lemmas \ref{Roots} and
\ref{LinSol}.
\end{theorem}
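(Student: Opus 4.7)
The plan is to pass to the Emden-Fowler variables $(\mathcal U,\mathcal V)$ introduced in \eqref{UV} and linearize about the constant solution $(1,1)$, with Theorem \ref{MCT} as the starting point. Set $U = 1 - \mathcal U$ and $V = 1 - \mathcal V$. By Theorem \ref{MCT}, both $U$ and $V$ are positive and tend monotonically to $0$ as $s\to\infty$. Taylor expanding $(1-V)^p$ and $(1-U)^q$, the system \eqref{EF} takes the form
\begin{equation*}
\left\{\begin{aligned}
U'' + (n-2-2\al)\,U' - Q(\al)\,U + pQ(\al)\,V &= N_1(V), \\
V'' + (n-2-2\be)\,V' - Q(\be)\,V + qQ(\be)\,U &= N_2(U),
\end{aligned}\right.
\end{equation*}
with $N_1(V) = O(V^2)$ and $N_2(U) = O(U^2)$ near $0$. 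Rewriting this as a first-order system $X' = LX + N(X)$ in $\R^4$ with $X = (U,V,U',V')^\top$ and $N(X) = O(|X|^2)$, a short block-determinant calculation shows that the characteristic polynomial of $L$ is exactly the quartic $F$ from \eqref{char}; in particular, the eigenvalues of $L$ are the four roots of $F$.

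By Lemma \ref{Roots}, $L$ has three eigenvalues in $(-\infty,0)$ and one in $(0,\infty)$, so the local stable manifold of $X' = LX + N(X)$ at the origin is three-dimensional. Since $X(s)\to 0$ as $s\to\infty$ and $N(0) = DN(0) = 0$, standard exponential-dichotomy/stable-manifold arguments yield an initial quantitative decay estimate $|X(s)| = O(e^{(-\gamma+\e)s})$ for every $\e>0$, where $-\gamma$ is the largest negative root of $F$ supplied by Lemma \ref{Roots}.

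To refine this estimate, apply the variation-of-constants formula along an eigenbasis of $L$ (with generalized eigenvectors adjoined in the double-root case). The two more negative roots $\mu_1,\mu_2$ of $F$ satisfy $\mu_2 \le \la_* \le -\gamma$ by Lemma \ref{Roots}, so in case (a) one has $\mu_2 < -\gamma$ and those two modes contribute terms of order $o(e^{-\gamma s})$. The nonlinear Duhamel integral is controlled by $|N(X)| = O(e^{(-2\gamma+2\e)s}) = o(e^{-\gamma s})$ once $\e$ is sufficiently small, and the unstable projection of $X(0)$ must vanish because $X(s)\to 0$. This leaves $X(s) = A\,\xi_3\, e^{-\gamma s} + o(e^{-\gamma s})$ for some $A\in\R$ (possibly $0$), where $\xi_3$ is an eigenvector of $L$ for the eigenvalue $-\gamma$. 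Substituting back via $u(e^s) = C_\al e^{-\al s}(1 - U(s))$ and $v(e^s) = C_\be e^{-\be s}(1 - V(s))$ and setting $r = e^s$ produces the expansion in part (a), with $D_1,D_2$ proportional to the first two components of $-A\xi_3$.

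In case (b), $-\gamma = \la_*$ is a double root of $F$ and $L$ has a nontrivial Jordan block there (a direct check shows that the eigenspace is one-dimensional), so the linearization admits a second homogeneous solution of the form $(s\xi_3 + \eta)e^{-\gamma s}$ in addition to $\xi_3 e^{-\gamma s}$. The same variation-of-constants/bootstrap argument then delivers $X(s) = A\, s\,\xi_3\, e^{-\gamma s} + O(e^{-\gamma s})$, producing the $\log r$ factor after the back-substitution. The main obstacle is the bootstrap in case (b): one must rule out higher-order logarithmic resonances (such as a spurious $\log^2 r$) by exploiting that $N(X)$ is quadratic in $X$, hence of size $O(s^2 e^{-2\gamma s})$, and showing that integrating this against the Jordan-block propagator preserves the asserted $s\,e^{-\gamma s}$ form rather than amplifying the logarithmic power.
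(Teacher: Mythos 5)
Your proposal is correct and follows essentially the same route as the paper: Emden--Fowler variables, linearization about the constant solution with characteristic quartic $F$ from \eqref{char}, elimination of the unstable mode via decay, and a variation-of-constants bootstrap exploiting the quadratic nonlinearity to isolate the $e^{-\gamma s}$ (resp.\ $s\,e^{-\gamma s}$) term. The only cosmetic differences are that you obtain the preliminary decay rate from the stable-manifold/exponential-dichotomy theorem where the paper runs a self-contained Gronwall-type estimate on the Duhamel formula, and you treat the Jordan-block case (b) in more detail than the paper's brief remark.
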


\begin{remark}
We note that the first terms in the asymptotic expansions are given by the singular solution
\eqref{ss}, while the second terms are related to the linearized solution \eqref{PhiPsi}. Although
these expansions were known in the scalar case, we are not aware of any similar results in the case
of systems. For the second-order equation $-\Delta u = u^p$ which corresponds to the case $p=q>1$,
we refer the reader to \cite{GNW, LiYi}.  For the biharmonic equation $\Delta^2 u = u^p$ which
corresponds to the case $p>q=1$, we refer the reader to \cite{PK12, MW}.
\end{remark}

\begin{proof}
We only prove part (a), as part (b) is quite similar.  Consider the variables
\begin{equation} \label{yz}
y(s) = \mathcal U(s) - 1 = \frac{u(e^s)}{C_\al e^{-\al s}} - 1, \qquad
z(s) = \mathcal V(s) - 1 = \frac{v(e^s)}{C_\be e^{-\be s}} - 1,
\end{equation}
where $s= \log r$ and $\mathcal U(s), \mathcal V(s)$ are defined by \eqref{UV}.  It trivially
follows by \eqref{EF} that
\begin{equation} \label{Nonl}
\left\{ \begin{aligned}
y''(s) + (n-2-2\al) \, y'(s) - Q(\al) y(s) &= Q(\al) \, (1 - (z(s) + 1)^p) \\
z''(s) + (n-2-2\be) \, z'(s) - Q(\be) z(s) &= Q(\be) \, (1 - (y(s) + 1)^q)
\end{aligned} \right.
\end{equation}
for all $s\in\R$.  The corresponding linearized system is then
\begin{equation*}
\left\{ \begin{aligned}
y''(s) + (n-2-2\al) \, y'(s) - Q(\al) y(s) &= -p Q(\al) z(s) \\
z''(s) + (n-2-2\be) \, z'(s) - Q(\be) z(s) &= -q Q(\be) y(s).
\end{aligned} \right.
\end{equation*}
Since $Q(\la)= \la(n-2-\la)$ for all $\la\in\R$, this can be written in the compact form
\begin{equation*}
\left\{ \begin{aligned}
Q( \al - \partial_s ) \, y(s) &= p Q(\al) z(s) \\
Q( \be - \partial_s ) \, z(s) &= q Q(\be) y(s),
\end{aligned} \right.
\end{equation*}
where $\partial_s = \frac{\partial}{\partial s}$.  In particular, the associated characteristic
polynomial has the form
\begin{equation*}
F(\la) = Q(\al - \la) Q(\be - \la) - pq Q(\al) Q(\be).
\end{equation*}
Focusing on part (a), we assume that the Joseph-Lundgren condition \eqref{JL} holds with strict
inequality. It then follows by Lemma \ref{Roots} that $F(\la)$ has four distinct real roots
\begin{equation} \label{eigen}
-\la_1 < -\la_2 < -\la_3 < 0 < \la_4,
\end{equation}
where $\la_i>0$ for each $i$ and $\la_3 = \gamma$ is the positive constant introduced in Lemma
\ref{LinSol}.

To analyze the nonlinear system \eqref{Nonl}, we first express it in the compact form
\begin{align} \label{Compact}
\left\{ \begin{aligned}
Q( \al - \partial_s ) \, y(s) - p Q(\al) z(s) &= f(z(s)) \\
Q( \be - \partial_s ) \, z(s) - q Q(\be) y(s) &= g(y(s)),
\end{aligned} \right.
\end{align}
where the nonlinear terms $f,g$ are defined by
\begin{equation} \label{fg}
f(z) = Q(\al) \,( (z+1)^p - 1 - pz), \qquad g(y) = Q(\be) \, ((y+1)^q - 1 - qy).
\end{equation}
The solution of this system can be obtained using variation of parameters.  Let us mostly worry
about $y(s)$, as the approach for $z(s)$ is similar.  Variation of parameters gives
\begin{align} \label{exp1}
y(s) &= \sum_{i=1}^3 \left[ A_i e^{-\la_i s}
+ B_i \int_{s_0}^s e^{-\la_i(s-\tau)} f(z(\tau)) \,d\tau
+ C_i \int_{s_0}^s e^{-\la_i(s-\tau)} g(y(\tau)) \,d\tau \right] \notag \\
&\quad + A_4 e^{\la_4 s} + B_4 \int_{s_0}^s e^{\la_4(s-\tau)} f(z(\tau)) \,d\tau
+ C_4 \int_{s_0}^s e^{\la_4(s-\tau)} g(y(\tau)) \,d\tau,
\end{align}
where $s_0\in \R$ is arbitrary, and the constants $A_i, B_i, C_i$ can all be determined explicitly.
The constants $A_i$ depend on both $s_0$ and the eigenvalues, whereas the constants $B_i,C_i$
depend on the eigenvalues only.  In view of our definition \eqref{yz}, we must have
\begin{equation} \label{limyz}
\lim_{s\to\infty} y(s) = \lim_{s\to \infty} z(s) = 0
\end{equation}
by Theorem \ref{MCT}.  Focusing on the last two integrals in \eqref{exp1}, we now write
\begin{align*}
y(s) &= \sum_{i=1}^3 \left[ A_i e^{-\la_i s}
+ B_i \int_{s_0}^s e^{-\la_i(s-\tau)} f(z(\tau)) \,d\tau
+ C_i \int_{s_0}^s e^{-\la_i(s-\tau)} g(y(\tau)) \,d\tau \right] \notag \\
&\quad - B_4 \int_s^\infty e^{-\la_4(\tau-s)} f(z(\tau)) \,d\tau
- C_4 \int_s^\infty e^{-\la_4(\tau-s)} g(y(\tau)) \,d\tau \notag \\
&\quad + e^{\la_4 s} \left[ A_4
+ B_4 \int_{s_0}^\infty e^{-\la_4 \tau} f(z(\tau)) \,d\tau
+ C_4 \int_{s_0}^\infty e^{-\la_4 \tau} g(y(\tau)) \,d\tau \right].
\end{align*}
The terms in the first two lines all approach zero as $s\to \infty$, so the same must be true for
the term in the last line.  Since $\la_4>0$, however, this simply means that
\begin{align} \label{exp2}
y(s) &= \sum_{i=1}^3 \left[ A_i e^{-\la_i s}
+ B_i \int_{s_0}^s e^{-\la_i(s-\tau)} f(z(\tau)) \,d\tau
+ C_i \int_{s_0}^s e^{-\la_i(s-\tau)} g(y(\tau)) \,d\tau \right] \notag \\
&\quad - B_4 \int_s^\infty e^{-\la_4(\tau-s)} f(z(\tau)) \,d\tau
- C_4 \int_s^\infty e^{-\la_4(\tau-s)} g(y(\tau)) \,d\tau.
\end{align}
We now use this formula to estimate $y(s)$.  Recalling our definition \eqref{fg}, we have
\begin{equation} \label{exp3}
f(z(\tau)) = o(\tau) \cdot |z(\tau)|, \qquad
g(y(\tau)) = o(\tau) \cdot |y(\tau)| \quad\text{as $\tau\to\infty$}
\end{equation}
by \eqref{limyz} since $p,q\geq 1$.  Employing this fact in \eqref{exp2}, we obtain the estimate
\begin{align*}
|y(s)| &\leq O(e^{-\la_3 s}) + C \int_{s_0}^s e^{-\la_3(s-\tau)} o(\tau) |z(\tau)| \,d\tau +
C \int_{s_0}^s e^{-\la_3(s-\tau)} o(\tau) |y(\tau)| \,d\tau \\
&\qquad + C\int_s^\infty e^{-\la_4(\tau-s)} o(\tau) |z(\tau)| \,d\tau
+ C\int_s^\infty e^{-\la_4(\tau-s)} o(\tau) |y(\tau)| \,d\tau
\end{align*}
as $s\to\infty$ because of \eqref{eigen}.  A similar estimate holds for $|z(s)|$ by symmetry. If we
denote their sum by $w(s) = |y(s)| + |z(s)|$, we may thus conclude that
\begin{align*}
w(s) &\leq O(e^{-\la_3 s}) + C \int_{s_0}^s e^{-\la_3(s-\tau)} o(\tau) w(\tau) \,d\tau +
C\int_s^\infty e^{-\la_4(\tau-s)} o(\tau) w(\tau) \,d\tau
\end{align*}
as $s\to \infty$.  This is easily seen to imply that $w(s) = O(e^{-\la_3 s})$ as $s\to \infty$,
namely
\begin{equation*}
|y(s)| + |z(s)| = w(s) = O(e^{-\la_3 s}) \quad\text{as $s\to \infty$.}
\end{equation*}
Since the functions $f,g$ defined by \eqref{fg} are quadratic near the origin, we also have
\begin{equation} \label{exp4}
f(z(\tau)) = O(e^{-2\la_3 \tau}), \qquad
g(y(\tau)) = O(e^{-2\la_3 \tau}) \quad\text{as $\tau\to\infty$.}
\end{equation}
This provides a more precise version of our previous estimate \eqref{exp3}.  Combining this new
version with our identity \eqref{exp2}, it is now easy to verify that
\begin{equation*}
y(s) = A_3 e^{-\la_3 s} + o(e^{-\la_3 s}) \quad\text{as $s\to\infty$.}
\end{equation*}
Since $s=\log r$ and $\gamma=\lambda_3$ by definition, it follows by \eqref{yz} that
\begin{equation*}
u(r) = C_\al r^{-\al} + A_3 C_\al r^{-\al-\gamma} + o(r^{-\al-\gamma}) \quad\text{as $r\to\infty$.}
\end{equation*}
A similar expansion holds for $v(r)$, and this completes the proof of part (a).  For part (b), the
exact same argument works, but $-\la_3$ is a double eigenvalue by Lemma \ref{Roots}.
\end{proof}

\section{Further properties of regular solutions}

We now establish some further properties of the regular positive radial solutions of \eqref{esys}.
Our starting point is a useful fact which is due to Serrin and Zou \cite{SZ3, SZ1}.  If $(u,v)$ is
any such solution, then its central value $(u(0),v(0))$ lies on the graph of a function $f$ which
is both continuous and strictly increasing.  This applies to general Hamiltonian systems
\cite{SZ3}. In the case of \eqref{esys}, however, the function $f$ can also be determined
explicitly.

\begin{lemma}
Consider the elliptic system \eqref{esys}, where $k,l\geq 0$, $p,q\geq 1$ and $pq>1$.  If the
existence condition \eqref{Sob} holds, then the regular positive radial solutions of the system
form a one-parameter family $\{ (u_\xi,v_\xi) \}_{\xi>0}$ such that
\begin{equation} \label{cv}
u_\xi(0) = \xi^\al, \qquad v_\xi(0) = C_0 \xi^\be
\end{equation}
for all $\xi>0$, where $C_0$ is a positive constant which only depends on $k,l,p,q$ and $n$.
\end{lemma}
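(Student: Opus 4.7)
The approach is to combine the classical Serrin--Zou parametrisation of positive radial solutions with the natural scaling invariance of \eqref{esys} dictated by the exponents $\al,\be$.

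First I would recall that by Serrin and Zou \cite{SZ3, SZ1}, the initial value problem for the radial ODE associated with \eqref{esys} has a unique solution, and the set of central values $(u(0),v(0))$ that produce a regular positive radial solution on all of $\R^n$ coincides with the graph of a continuous, strictly increasing function $f:(0,\infty)\to(0,\infty)$. Thus the family of regular positive radial solutions is genuinely one-parameter, and every such solution is uniquely determined by its value at the origin.

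Next I would check that the system \eqref{esys} is invariant under the scaling
\begin{equation*}
u_\la(x) = \la^\al u(\la x), \qquad v_\la(x) = \la^\be v(\la x), \qquad \la>0.
\end{equation*}
A short computation using the definition \eqref{AB} shows $\al+2+k = \be p$ and $\be+2+l = \al q$, so that $(u_\la,v_\la)$ is again a regular positive radial solution of \eqref{esys}. Consequently, fixing any one reference solution $(u_1,v_1)$ (whose existence is guaranteed by \eqref{Sob} and the classical theory), the whole orbit $\{(u_\la,v_\la)\}_{\la>0}$ consists of regular positive radial solutions with central values
\begin{equation*}
u_\la(0) = \la^\al \, u_1(0), \qquad v_\la(0) = \la^\be \, v_1(0).
\end{equation*}
Reparameterising by $\xi = \la\, u_1(0)^{1/\al}$ gives $u_\la(0) = \xi^\al$ and $v_\la(0) = C_0\xi^\be$ with
\begin{equation*}
C_0 = \frac{v_1(0)}{u_1(0)^{\be/\al}},
\end{equation*}
which depends only on the reference solution.

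Finally, I would combine the two ingredients: the Serrin--Zou graph $v(0)=f(u(0))$ must be invariant under the scaling $(a,b)\mapsto(\la^\al a,\la^\be b)$, since its image under scaling still consists of central values of regular positive solutions. The only continuous strictly increasing graph invariant under this one-parameter group of dilations is $b = C_0 a^{\be/\al}$, so every regular positive radial solution has central values of the form \eqref{cv}, and conversely every such pair of central values is realised by exactly one solution in the scaled family. The potential obstacle is in verifying that $C_0$ is genuinely independent of the reference solution chosen, but this is immediate once one knows the Serrin--Zou graph is unique and scaling-invariant, since all reference points then lie on the single curve $b=C_0 a^{\be/\al}$.
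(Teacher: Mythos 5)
Your proposal is correct and takes essentially the same route as the paper: the paper fixes the (unique) regular solution with $u_1(0)=1$, citing \cite{BVG} for existence and \cite{HV} for uniqueness, and generates the whole family by exactly the scaling $u_\xi(r)=\xi^\al u_1(\xi r)$, $v_\xi(r)=\xi^\be v_1(\xi r)$ that you use. Your extra step about the Serrin--Zou graph being invariant under the dilation group is just a repackaging of that uniqueness ingredient, so the two arguments coincide in substance.
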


\begin{proof}
We refer the reader to \cite{BVG} for the existence and \cite{HV} for the uniqueness of solutions.
Denote by $(u_1,v_1)$ the unique regular positive radial solution which satisfies $u_1(0)=1$. Then
every other regular positive radial solution can be obtained through the scaling
\begin{equation} \label{scale}
u_\xi(r) = \xi^\al u_1(\xi r), \qquad v_\xi(r) = \xi^\be v_1(\xi r).
\end{equation}
Letting $C_0 = v_1(0)$, one finds that $(u_\xi,v_\xi)$ is the unique solution which satisfies
\eqref{cv}.
\end{proof}

\begin{lemma} \label{inter}
Let the assumptions of the previous lemma hold, and assume that the Joseph-Lundgren condition
\eqref{JL} also holds.  Given any $\xi_1 > \xi_2 > 0$, we then have
\begin{equation}
u_{\xi_1}(r) > u_{\xi_2}(r) \quad\text{and}\quad
v_{\xi_1}(r) > v_{\xi_2}(r) \quad\text{for all $r\geq 0$.}
\end{equation}
\end{lemma}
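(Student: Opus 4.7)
The plan is to exploit the scaling identity \eqref{scale} to reduce the two-variable monotonicity claim (in $\xi$ at each fixed $r$) to a one-variable monotonicity statement for the normalized solution $(u_1,v_1)$, which will then follow from Theorem \ref{MCT}. Define
\[ g(s) = s^\alpha u_1(s), \qquad h(s) = s^\beta v_1(s) \qquad \text{for } s>0. \]
The scaling \eqref{scale} gives $r^\alpha u_\xi(r) = g(\xi r)$ and $r^\beta v_\xi(r) = h(\xi r)$ for every $r>0$, so if $g$ and $h$ are strictly increasing on $(0,\infty)$, then for each fixed $r>0$ the maps $\xi \mapsto u_\xi(r)$ and $\xi \mapsto v_\xi(r)$ are strictly increasing in $\xi$. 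At $r=0$, strict monotonicity in $\xi$ is immediate from \eqref{cv} since $\alpha,\beta>0$.

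It thus suffices to establish strict monotonicity of $g$ and $h$. Observe that $g(s)/C_\alpha$ coincides with the ratio $\mathcal U(\log s)$ appearing in the proof of Theorem \ref{MCT} when applied to $(u_1,v_1)$, and similarly $h(s)/C_\beta = \mathcal V(\log s)$. Theorem \ref{MCT} asserts these ratios are non-decreasing, so what I really need is to upgrade the conclusion to strict monotonicity. I would argue by contradiction: suppose $\mathcal U$ were constant on some interval $[s_1,s_2]$. Then $\mathcal U'=\mathcal U''=0$ there, so the first equation of \eqref{EF} forces $\mathcal V^p = \mathcal U$ on $[s_1,s_2]$; hence $\mathcal V$ is likewise constant there, and the second equation of \eqref{EF} then yields $\mathcal V = \mathcal U^q$. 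Combining these gives $\mathcal U^{pq}=\mathcal U$, which together with $\mathcal U>0$ and $pq>1$ forces $\mathcal U \equiv 1$ on $[s_1,s_2]$, contradicting the strict inequality $\mathcal U<1$ supplied by Lemma \ref{CoLe}. The analogous argument rules out any interval on which $\mathcal V$ is constant, so both $g$ and $h$ are strictly increasing on $(0,\infty)$.

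The lemma now follows at once: for $\xi_1>\xi_2>0$ and $r>0$,
\[ u_{\xi_1}(r) - u_{\xi_2}(r) = r^{-\alpha}\bigl[g(\xi_1 r) - g(\xi_2 r)\bigr] > 0, \]
and the same calculation handles $v$. The only nontrivial step in this plan is the bootstrap from non-strict to strict monotonicity of $\mathcal U$ and $\mathcal V$; this is the main (mild) obstacle, and the separation property of Lemma \ref{CoLe} combined with the ODE structure of \eqref{EF} supplies exactly what is needed. I expect no other technical difficulties, since the scaling identity is algebraic and reduces everything to the already-established monotonicity of the Emden--Fowler trajectory.
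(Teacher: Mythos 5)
Your argument is correct, and its first half is exactly the paper's: both proofs use the scaling \eqref{scale} to convert the monotonicity of $r\mapsto r^{\alpha}u_1(r)$, $r\mapsto r^{\beta}v_1(r)$ (Theorem \ref{MCT}) into monotonicity in $\xi$, which yields the weak inequalities $u_{\xi_1}\ge u_{\xi_2}$, $v_{\xi_1}\ge v_{\xi_2}$. Where you diverge is the upgrade to strict inequality. The paper stays in the physical variables: it notes that $(r^{n-1}(u_{\xi_1}'-u_{\xi_2}'))'\le 0$, so $u_{\xi_1}-u_{\xi_2}$ decreases to zero by Lemma \ref{Limit}; if it vanished at some $R$ it would vanish on all of $[R,\infty)$, and then \eqref{ineq} would force $v_{\xi_1}=v_{\xi_2}$ there, contradicting uniqueness of solutions. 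You instead rule out any interval of constancy of the Emden--Fowler trajectory $(\mathcal U,\mathcal V)$: constancy on an interval forces, via \eqref{EF}, $\mathcal U=\mathcal V^p$ and $\mathcal V=\mathcal U^q$, hence $\mathcal U\equiv 1$, contradicting the strict separation $\mathcal U<1$ of Lemma \ref{CoLe}; since a non-decreasing function with no interval of constancy is strictly increasing, strictness of the separation in $\xi$ follows. Both routes are legitimate; yours has the mild advantage of not invoking uniqueness of the regular solution with prescribed central value (for which the paper cites the literature), relying instead on the strict inequality already established in Lemma \ref{CoLe} and the autonomous system \eqref{EF} (note that this uses $Q(\alpha),Q(\beta)>0$, which holds under \eqref{Sob}), while the paper's route is a shorter self-contained ODE comparison once uniqueness is granted.
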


\begin{proof}
According to Theorem \ref{MCT}, the functions defined by
\begin{equation*}
\mathcal{U}_\xi(r) = r^\al u_\xi(r), \qquad \mathcal{V}_\xi(r) = r^\be v_\xi(r)
\end{equation*}
are both increasing with respect to $r$.  On the other hand, it follows by \eqref{scale} that
\begin{equation*}
\mathcal{U}_\xi(r) = (\xi r)^\al u_1(\xi r), \qquad \mathcal{V}_\xi(r) = (\xi r)^\be v_1(\xi r).
\end{equation*}
Monotonicity with respect to $r$ is thus equivalent to monotonicity with respect to $\xi$, so
\begin{equation*}
u_{\xi_1}(r) \geq u_{\xi_2}(r) \quad\text{and}\quad
v_{\xi_1}(r) \geq v_{\xi_2}(r) \quad\text{for all $r\geq 0$.}
\end{equation*}
It remains to show that strict inequality holds in each case.  Since
\begin{equation} \label{ineq}
(r^{n-1} (u_{\xi_1}' - u_{\xi_2}'))' = r^{n-1+k} \, (v_{\xi_2}^p - v_{\xi_1}^p) \leq 0
\quad\text{for all $r\geq 0$,}
\end{equation}
we find that $u_{\xi_1} - u_{\xi_2}$ is decreasing to zero by Lemma \ref{Limit}. If it vanishes at
some point $R$, it must thus vanish throughout $[R,\infty)$.  In that case, \eqref{ineq} gives
$v_{\xi_1} = v_{\xi_2}$ throughout $[R,\infty)$, and this obviously contradicts the uniqueness of
solutions.  In particular, $u_{\xi_1} - u_{\xi_2}$ remains positive at all points, and the same is
similarly true for $v_{\xi_1} - v_{\xi_2}$.
\end{proof}

\begin{proof_of}{Theorem \ref{ST}}
This is a typical application of Theorem \ref{AsEx} and Lemma \ref{inter}.  In fact, one may
proceed as in the classical argument by Gui, Ni and Wang \cite{GNW}.  We only give a sketch of the
proof for part (a), as the proof of part (b) is quite similar.

To show that the solution $(u_\xi,v_\xi)$ is stable with respect to the norm \eqref{Norm1}, let
$\e>0$ be given.  According to Theorem \ref{AsEx}, there exists some $R_{\xi,\e}>0$ such that
\begin{equation*}
u_\xi(r) = C_\al r^{-\al} + D_1(\xi) r^{-\al-\gamma} + E_\xi(r)
\quad\text{in $[R_{\xi,\e},\infty)$,}
\end{equation*}
where $|E_\xi(r)| \leq \e r^{-\al-\gamma}$ in $[R_{\xi,\e},\infty)$.  On the other hand, it follows
by scaling that
\begin{equation*}
u_\eta(r) = \frac{\eta}{\xi} \,u_\xi \bigl( (\eta/\xi)^{1/\al} \, r \bigr)
\quad\text{for each $\eta>0$.}
\end{equation*}
Assume that $\eta$ is sufficiently close to $\xi$, say $|\eta - \xi| < \frac{\xi}{2}$. Combining
the last two equations, we can then find constants $R_\e, C$ independent of $\eta$ such that
\begin{equation*}
u_\eta(r) = C_\al r^{-\al} + D_1(\eta) r^{-\al-\gamma} + E_\eta(r)
\quad\text{in $[R_\e, \infty)$,}
\end{equation*}
where $D_1(\eta) = (\frac{\xi}{\eta})^{\al+\gamma} D_1(\xi)$ and $|E_\eta(r)|\leq C\e
r^{-\al-\gamma}$ in $[R_\e, \infty)$.  This implies that
\begin{equation*}
|r^{\al+\gamma} (u_\eta - u_\xi)| \leq |D_1(\eta) - D_1(\xi)| + 2C\e
\quad\text{in $[R_\e, \infty)$,}
\end{equation*}
and we also have $|(1+r)^{\al+\gamma} (u_\eta - u_\xi)| \to 0$ uniformly in $[0,R_\e]$ as $\eta\to
\xi$. Since similar estimates hold with $u_\eta$ and $u_\xi$ replaced by $v_\eta$ and $v_\xi$,
respectively, it follows by \eqref{Norm1} that
\begin{equation*}
\lim_{\eta \to \xi} || (u_\eta, v_\eta) - (u_\xi, v_\xi) ||_\gamma = 0.
\end{equation*}
In particular, given any $\e>0$, there exists some small enough $\theta\in (0, \frac{\xi}{2} )$
such that
\begin{equation} \label{stab1}
|| (u_{\xi\pm\theta}, v_{\xi\pm\theta}) - (u_\xi, v_\xi) ||_\gamma < \e.
\end{equation}

We now exploit this fact in order to prove stability.  According to Lemma \ref{inter}, one has
\begin{equation*}
u_{\xi -\theta} < u_\xi < u_{\xi + \theta} \quad\text{and}\quad
v_{\xi -\theta} < v_\xi < v_{\xi + \theta}
\end{equation*}
at all points.  Arguing as in \cite{GNW}, we can thus find some small enough $\delta>0$ such that
\begin{equation*}
u_{\xi -\theta} < f < u_{\xi + \theta} \quad\text{and}\quad
v_{\xi -\theta} < g < v_{\xi + \theta}
\end{equation*}
whenever $||(f,g) - (u_\xi,v_\xi)||_\gamma < \delta$.  Let $(u,v)$ be the solution of the parabolic
system \eqref{psys} subject to $u(x,0)=f(x)$ and $v(x,0)=g(x)$.  Since $(u_{\xi\pm\theta},
v_{\xi\pm\theta})$ satisfy the same system, one may use standard comparison theorems for parabolic
systems \cite{LS} to conclude that
\begin{equation} \label{stab2}
u_{\xi -\theta} < u < u_{\xi + \theta} \quad\text{and}\quad
v_{\xi -\theta} < v < v_{\xi + \theta},
\end{equation}
as long as $||(f,g) - (u_\xi,v_\xi)||_\gamma < \delta$.  The result now follows by \eqref{stab1}
and \eqref{stab2}.
\end{proof_of}

\section*{Acknowledgements}
The first author acknowledges the financial support of The Irish Research Council Postgraduate
Scholarship under grant number GOIPG/2022/469.

\end{document}